\documentclass[10 pt, a4 paper, oneside, openany]{article}
\usepackage[T1]{fontenc}
\usepackage{amsmath}
\usepackage[utf8]{inputenc}
\usepackage[english]{babel}
\usepackage{graphicx}
\usepackage{enumerate}
\usepackage{enumitem}
\usepackage{amsthm}
\usepackage{amsfonts}
\usepackage{amssymb}
\usepackage{amsthm}
\usepackage{wrapfig}
\usepackage{pgfplots}
\pgfplotsset{compat=1.15}
\usepackage{mathrsfs}
\usepackage{hyperref}
\usetikzlibrary{arrows}
\usepackage[swapnames, nouppercase]{frontespizio} 
\usepackage{geometry}
\usepackage{tikz}
\usepackage{float}
\usepackage{appendix}
\usepackage{setspace}
\usepackage{xcolor}
\allowdisplaybreaks

\newgeometry{right=1.5cm} 

\newtheorem{teorema}{Theorem}[section]

\newtheorem{lemma}[teorema]{Lemma}
\newtheorem{proposizione}[teorema]{Proposition}

\theoremstyle{definition}
\newtheorem{osservazione}[teorema]{Remark}
\newtheorem{definition}[teorema]{Definition}

\begin{document}
\title{On the structure of stable constant anisotropic mean curvature hypersurfaces}
\date{}
\author{Claudia Pontuale}

\maketitle

\begin{abstract}
We prove that a stable noncompact hypersurface with constant anisotropic mean curvature in $\mathbb{R}^{n+1}$ has only one end for $n \le 6$, under a natural ellipticity condition on the anisotropy. This provides an anisotropic counterpart of the one-end theorem for stable constant mean curvature hypersurfaces. We remark that the result is also new in $\mathbb R^7$ (n=6),  in the case of anisotropic minimal hypersurfaces. The proof relies on the existence of a Sobolev-type inequality.
\end{abstract}

\section*{Introduction}
In the study of complete noncompact hypersurfaces, the number of ends provides a natural way to describe their behavior at infinity.

 In their seminal work \cite{caoshenzhu}, Cao, Shen and Zhu showed that a complete noncompact stable minimal hypersurface immersed in Euclidean space must have only one end. Their argument relies on a theorem of Schoen and Yau \cite{schoenyau}, which states that such hypersurfaces do not admit nonconstant harmonic functions with finite Dirichlet energy. Assuming the existence of more than one end, they constructed such a function, thus obtaining a contradiction.

In the following years, several extensions of this result were obtained. \\In the case of finite-index minimal hypersurfaces, Li and Wang \cite{liwang}
proved that such hypersurfaces have only finitely many ends. 
 \\This relation between index and topology was further
developed by Chao Li \cite{chaoli}, who obtained quantitative lower bounds for
the Morse index in terms of the number of ends and the first Betti number.

The constant mean curvature case requires a separate treatment. Indeed, in this setting,
the presence of the mean curvature terms leads to
dimension-dependent restrictions.\\ In this direction, Cheng, Cheung and Zhou (\cite{chengcheungzhou}) proved that a complete, noncompact, weakly stable CMC hypersurface has only one end in $\mathbb R^{n+1}, n \le 5$. \\This result was later generalized by Fu and Li \cite{fuli} to dimension 6, considering only the case of strongly stable hypersurfaces. \\More recently, Chen, Hong and Li \cite{chenhongli} showed that a finite-index CMC hypersurface has only finitely many ends up to dimension 6.

A key idea behind these contributions is the use of harmonic functions to detect the
topology at infinity. This point of view is closely related to the work of
Li and Tam \cite{litam,litam2}.

Anisotropic generalizations of these results have also attracted considerable attention. 
In this framework, one fixes a smooth, positive and strictly convex function 
$\varphi :  \mathbb R^{n+1} \setminus \{0\}\to \mathbb R$ and considers the anisotropic area functional of a hypersurface $M$
\begin{equation*}
\label{eq:area-intro}
\mathcal A_\varphi(M) = \int_M \varphi(\nu)\, d\mu,
\end{equation*}
where $\nu$ denotes the unit normal to $M$ and $d \mu$ the volume element. 

In complete analogy with the isotropic case, critical points of \(\mathcal A_\varphi\)
under volume-preserving variations are hypersurfaces whose anisotropic mean
curvature is constant (see Section 1 for a definition). We denote this constant by \(H_\varphi\), and refer to the
corresponding hypersurfaces as CAMC hypersurfaces.

The special case \(H_\varphi=0\) corresponds to anisotropic minimal hypersurfaces.
Throughout the paper, unless otherwise specified, CAMC hypersurfaces include this
minimal case. The corresponding notions of stability, with and without the volume
constraint, are recalled in Section 1.

Such hypersurfaces naturally arise in models of crystalline interfaces (see the Introduction in \cite{koisopalmer} for a brief explanation of the phenomena), where the surface energy depends on the orientation of the normal and 
have been extensively studied in, for example, \cite{galvezmiratassi}, \cite{heli}, \cite{palmer}, \cite{pham}.

In $\mathbb R^4$, Chodosh and Li \cite{chodoshli} established a one-end theorem for anisotropic minimal hypersurfaces, which was subsequently generalized by Li and Xia \cite{lixia} to anisotropic minimal hypersurfaces in $\mathbb R^5, \mathbb R^6$. 
Both these results require on a \emph{pinching hypothesis} on the Hessian of the anisotropy $\varphi$, namely
\begin{equation}
\label{eq:pinching-intro-lambda}
  \lambda_n^{min} |v|^2 \le D^2 \varphi (\nu)(v,v) \le \lambda_n^ {max}|v|^2, \forall v \in \nu ^ \perp
\end{equation}
with $ \lambda_n^{min}$ and $\lambda_n^ {max}$ the lowest and greatest eigenvalues of $D^2 \varphi(\nu)$.

This condition implies the uniform ellipticity of the anisotropic area functional and ensures that the induced anisotropic geometry is comparable to the Euclidean one.

Motivated by these developments, we prove the following result.

\begin{teorema}
    \label{th:main-intro}
    Let $M$ be a weakly stable constant anisotropic mean curvature hypersurface in $\mathbb R^{n+1}$ ($n \le 6$) and assume that there exists a constant $\Lambda_n < \frac{n^2}{n^2-1}$ such that  \begin{equation*} 
    |v|^2 \le D^2 \varphi (\nu)(v,v) \le  \Lambda_n|v|^2, \forall v \in \nu ^\perp
\end{equation*} 
Then, $M$ has only one end.
\end{teorema}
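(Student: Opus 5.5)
We follow the Cao--Shen--Zhu / Cheng--Cheung--Zhou scheme, adapted to the anisotropic stability operator. The argument has three steps: every end is non-parabolic, more than one end produces a bounded harmonic function of finite energy, and stability forces that function to be constant.

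\textbf{Step 1: the stability inequality.} Weak stability of a CAMC hypersurface gives
\[
\int_M |\nabla f|^2_{A}\,d\mu \ \ge\ \int_M \langle A\circ d\nu, d\nu\rangle f^2\,d\mu
\]
for compactly supported $f$ with $\int_M f\,d\mu=0$. Here $A=D^2\varphi(\nu)|_{\nu^\perp}$ and $|\nabla f|_A^2=\langle A\nabla f,\nabla f\rangle$. The pinching $|v|^2\le A(v,v)\le\Lambda_n|v|^2$ makes the left side comparable to $\int|\nabla f|^2$ up to $\Lambda_n$. The right side dominates $\operatorname{tr}(A\,d\nu\, d\nu)$, which is bounded below by $|d\nu|^2 = |h|^2$. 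We then use $|h|^2\ge H^2/n$ and the relation $H_\varphi=\operatorname{tr}(A\,d\nu)$, so that $H_\varphi^2\le\Lambda_n\, n|h|^2$. Choosing $f$ supported outside a compact set removes the zero-mean constraint in the usual way. Since $H_\varphi$ is constant, this outside inequality, together with the Michael--Simon Sobolev inequality (the Sobolev-type inequality announced in the abstract), shows that each end has infinite volume and is non-parabolic. This is the standard Cheng--Cheung--Zhou argument when $H_\varphi\neq 0$. For $H_\varphi=0$, the Sobolev inequality on a minimal-type hypersurface gives non-parabolicity directly.

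\textbf{Step 2: the harmonic function.} Suppose $M$ has two ends. Li--Tam theory then produces a nonconstant bounded harmonic function $u$ with $\int_M|\nabla u|^2<\infty$. We take $u$ harmonic for the Laplace--Beltrami operator, which is the natural choice for the Bochner argument below.

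\textbf{Step 3: the key estimate.} The Bochner formula on the hypersurface reads
\[
\tfrac12\Delta|\nabla u|^2=|\nabla^2u|^2+\mathrm{Ric}(\nabla u,\nabla u).
\]
We combine it with the refined Kato inequality $|\nabla^2 u|^2\ge\frac{n}{n-1}|\nabla|\nabla u||^2$ and with the Gauss equation bound
\[
\mathrm{Ric}\ge -\tfrac{n-1}{n}|h|^2+\text{(terms in }H^2\text{)}.
\]
We then insert $f=|\nabla u|\eta$ into the stability inequality, with $\eta$ a cutoff. Stability bounds $\int|h|^2|\nabla u|^2\eta^2$ by $\Lambda_n$ times the energy of $f$. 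The Bochner side supplies $\frac{n}{n-1}$ from Kato and $\frac{n-1}{n}$ from Ricci. Closing the resulting inequality
\[
\Big(\tfrac{1}{n-1}\ \text{or}\ \tfrac{n}{n-1}-\Lambda_n\tfrac{n-1}{n}\cdot\tfrac{n}{n-1}\cdots\Big)\int|\nabla|\nabla u||^2\eta^2 \ \lesssim\ \int|\nabla u|^2|\nabla\eta|^2 + (\text{$H$-terms})
\]
requires exactly a strict condition of the form $\Lambda_n<\frac{n^2}{n^2-1}$.

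The mean curvature terms $H^2|\nabla u|^2$ are absorbed using the $H_\varphi^2$-term from stability, split with Young's inequality against $|h|^2$. This absorption is possible only for $n\le 6$, as in Fu--Li.

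\textbf{Conclusion.} Letting $\eta\to 1$ and using finite energy gives $\nabla|\nabla u|\equiv0$, so $|\nabla u|$ is constant. Since $\int_M|\nabla u|^2<\infty$ and $M$ has infinite volume, this forces $\nabla u\equiv0$, contradicting nonconstancy. Hence $M$ has only one end.

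\textbf{Main obstacle.} The main obstacle is Step 3. One must track how the anisotropic factors, namely $A$ in the gradient term and $\operatorname{tr}(A\,d\nu\,d\nu)$ versus $|h|^2$, degrade the Kato/Gauss constants. One must also verify that the $H$-terms close precisely when $\Lambda_n<n^2/(n^2-1)$ and $n\le6$. I would first carry out the computation with general parameters from Young's inequality, and then optimize over them.
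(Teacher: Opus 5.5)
Your overall route is the paper's: a Sobolev inequality from stability plus Michael--Simon, infinite-volume/non-parabolic ends, a finite-energy harmonic function, then Bochner--Kato--stability. Obtaining $u$ from Li--Tam via non-parabolicity, instead of the paper's direct Cao--Shen--Zhu exhaustion, is a harmless substitution.

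The genuine gap is in Step 3. $M$ is only \emph{weakly} stable, so the stability inequality applies only to test functions $g$ with $\int_M g\,d\mu=0$. But $g=|\nabla u|\eta$ with a nonnegative cutoff $\eta$ has $\int_M g>0$, so it is not admissible. Your Step 1 device (strong stability outside a compact set $K$) does not rescue this. Then $\eta$ must vanish near $K$, and the term $\int|\nabla\eta|^2|\nabla u|^2$ over the inner transition region $K'\setminus K$ is a fixed quantity that does not tend to zero as the outer radius grows. You would only get $\int_{M\setminus K'}|\nabla|\nabla u||^2<\infty$, not $\nabla|\nabla u|\equiv 0$.

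The missing idea is the Cheng--Cheung--Zhou sign-changing test function used in the paper. Take $f=f(t_0,a,b,R)$ equal to $1$ on $B_p(a)$ and $-t_0$ on $B_p(a+2R+b)\setminus B_p(a+2R)$, linear with slope at most $1/R$ on the transition annuli, and $0$ outside $B_p(a+3R+b)$. Choose $b$ and $t_0$ so that $\int_M f|\nabla u|=0$. Then $f|\nabla u|$ is admissible, $\int|\nabla f|^2|\nabla u|^2\le R^{-2}\int_M|\nabla u|^2\to 0$, and one gets $\nabla|\nabla u|=0$ on $B_p(a)$ for every $a$.

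The constant bookkeeping in Step 3 is also missing, and the Ricci bound you write is not the one that produces the stated hypotheses. Even when $H_\varphi=0$ the classical mean curvature $H$ need not vanish. The estimate the paper uses (Shiohama--Xu) is $\operatorname{Ric}\ge\frac{n-1}{n}\bigl(nH^2-\frac{n(n-2)}{\sqrt{n(n-1)}}|H||\mathring{A}|-|\mathring{A}|^2\bigr)$. It contains a cross term $|H||\mathring{A}|$, not only $H^2$-terms.

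The paper splits that cross term by Young's inequality into $\frac{(n-2)^2(n-1)}{8}H^2+\frac{2}{n}|\mathring{A}|^2$. This raises the coefficient of $|\mathring{A}|^2$ from $\frac{n-1}{n}$ to $\frac{n+1}{n}$. After applying stability to $|A|^2=nH^2+|\mathring{A}|^2$:
\begin{itemize}
\item the leading coefficient is $-\frac{n}{n-1}+\frac{n+1}{n}\Lambda_n$, which is negative iff $\Lambda_n<\frac{n^2}{n^2-1}$;
\item the leftover $H^2$ coefficient is $\frac{(n-2)^2(n-1)}{8}-2n$, which is nonpositive iff $n\le 6$.
\end{itemize}
If the Ricci term contributed only $\frac{n-1}{n}$, as your sketch suggests, the same computation would give the threshold $\frac{n^2}{(n-1)^2}$. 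So both restrictions in the theorem come from this specific split, and your claim that the computation yields exactly $\Lambda_n<\frac{n^2}{n^2-1}$ is not supported by what you wrote.
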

This theorem shows that, in low dimensions, stability alone controls the topology of constant anisotropic mean curvature hypersurfaces, forcing them to have exactly one end.

It is worth noting that in the isotropic CMC case, our argument gives a different proof of the 
one-end theorem of Fu and Li \cite{fuli} for stable CMC hypersurfaces. Moreover, in the anisotropic minimal case 
\((H_\varphi=0)\), it extends the one-end results of Chodosh--Li 
\cite{chodoshli} and Li--Xia \cite{lixia} up to dimension 
\(n=6\). Finally, for nonzero constant 
anisotropic mean curvature hypersurfaces, \(H_\varphi\neq0\), the result 
appears to be new in every dimension.

\section{Preliminaries on anisotropic functionals}

This section is devoted to the proof of some preliminary results for anisotropic functionals. 

Throughout this article, $\varphi: \mathbb R^{n+1} \setminus \{0\} \to (0, \infty)$ is a 1-homogeneous $C^3_{loc}$ function (i.e. $\varphi (tv)= t \varphi(v)$, for each $t >0$ and for each $v \in \mathbb  R^{n+1} \setminus \{0\}$).\\ Given an immersion $X: M \hookrightarrow \mathbb R^{n+1}$ we define the \emph{anisotropic area functional} as
\begin{equation*}
    \label{eq:anis-areafun}
    \mathcal A_\varphi(M)= \int_{M} \varphi( \nu(x)) \,d\mu,
\end{equation*}
where $\nu(x)$ is the unit normal at the point $x \in M$ and $d\mu$ is the volume element. We say that the functional $\mathcal A_\varphi$ is \emph{elliptic} if $D^2 \varphi (\nu)$ is positive definite for all $\nu \in \mathbb S^n$.

We begin by deriving the first and second variation formulae for hypersurfaces with constant anisotropic mean curvature. The first variation formula is standard, and the second variation formula is well known in the anisotropic minimal case (see, e.g., \cite{chodoshli}). However, to the best of our knowledge, a complete computation for CAMC hypersurfaces under the volume constraint is not readily available in the literature. For this reason, we include the details here, adapting the argument of \cite{chodoshli} for anisotropic minimal hypersurfaces.

We consider a compactly-supported, volume-preserving, normal variation of $X$,
\begin{equation}
    \label{eq:xt}
    X_t= X + t f \nu,
\end{equation}
with $f \in C_c^1(M \setminus \partial M)$ and $\int_Mf=0$. 

In the following computations, $\bar \nabla $ will be the connection of $\mathbb R^{n+1}$ and $\nabla$ the induced connection on the hypersurface $M$. We use the notation $D$ (resp. $D^2$) to represent the gradient (resp. the Hessian) of a function defined in $\mathbb R^{n+1}$. Finally, we denote with $\xi$ the variational vector field, that is,
\begin{equation*}
    \label{eq:xi}
    \xi=  \left. \frac{d}{dt}  \right|_{t=0} X_t.
\end{equation*}
We have
\begin{align*}
\begin{split}
    \label{eq:variation1}
    \left. \frac{d}{dt}  \right|_{t=0} \mathcal A _\varphi(M_t)&= \int_{M} \varphi (\nu) H f \,d\mu + \int_{M} \left( \left. \frac{d}{dt}  \right|_{t=0} \varphi (\nu) \right )\, d\mu=\\
    &= \int_{M} (\varphi ( \nu) H f - \langle D \varphi (\nu), \nabla f \rangle )\,d\mu
\end{split}\end{align*}
where, in the last line, we used that, for a variation of the form \eqref{eq:xt}, 
\begin{equation*}
  \left. \frac{d}{dt}  \right|_{t=0} \nu= -\nabla f.  
\end{equation*} 
Now, applying the divergence theorem, and using that $f$ has compact support, one gets
\begin{equation*}\begin{split}
    \label{eq:variation2}
    \left. \frac{d}{dt}  \right|_{t=0} \mathcal A_ \varphi (M_t)&= \int_{M}\left( H \varphi(\nu) + \operatorname{div}_M (D \varphi (\nu)^T)\right)f \, d \mu=\\
    &=\int_{M} \left( H \varphi (\nu) + \operatorname{div}_M (D \varphi (\nu)- \langle D \varphi(\nu), \nu \rangle \nu \right))f \, d    \mu=\\
    &= \int_{M} \left(H \varphi (\nu) + \operatorname{div}_M (D \varphi (\nu) \right)- \langle D \varphi (\nu), \nu \rangle Hf \, d\mu,
\end{split}\end{equation*}
with $^T$ denoting the tangential to $M$.\\Applying Euler's theorem for homogeneous functions to $\varphi (\nu)$, we obtain $\langle D \varphi (\nu), \nu \rangle = \varphi(\nu)$.
\\Thus,
\begin{equation*}
    \label{eq:variation3}
    \left. \frac{d}{dt}  \right|_{t=0} \mathcal A_\varphi (M_t)= \int_{M} \operatorname{div}_M (D \varphi (\nu)) f \, d\mu.
\end{equation*}
In analogy to the first variation formula for the classical area functional, one defines the \emph{anisotropic mean curvature} as
\begin{equation}
    \label{eq:meancurv-anis}
    H_\varphi= \operatorname{div}_M (D \varphi (\nu)).  
\end{equation}
We shall now discuss the second variation. We let $\{e_1,e_2,...,e_n\}$ denote an orthonormal frame on $M$ and note that
\begin{equation*}
    \begin{split}
        \operatorname{div}_M( D \varphi (\nu))&= \sum_{i=1}^n  \langle \nabla_{e_i} (D \varphi (\nu)) , e_i \rangle =\\
        &= \sum_{i=1}^n D^2 \varphi (\nu) \left(A(e_i), e_i\right),
    \end{split}
\end{equation*}
where $A$ is the second fundamental form of $M$.
We define $\psi (\nu): T \mathbb R^{n+1} \to T \mathbb R^{n+1}$ by $\psi (\nu): X \to D^2 \varphi (\nu)[X, \cdot]$.
This implies
\begin{equation}
    \label{eq:meancurv-anis2}
    H_\varphi= \operatorname{tr}_M (\psi (\nu) A).
\end{equation}
Differentiating \eqref{eq:meancurv-anis2},
\begin{equation}
    \begin{split}
        \label{eq:meancurv-anis3}
        \left. \frac{d}{dt}  \right|_{t=0} H_\varphi &= \operatorname{tr}_M \left(  \left. \frac{d}{dt}  \right|_{t=0} \psi(\nu) A\right)  + \operatorname{tr}_M \left ( \psi(\nu) \left. \frac{d}{dt}  \right|_{t=0} A \right ).
    \end{split}
\end{equation}
Note that the first term in \eqref{eq:meancurv-anis3} is
\begin{equation}
    \label{eq:meancurv-anis4}
    -\operatorname{tr}_M \left( \langle D \psi (\nu), \nabla f \rangle A\right).
\end{equation}
To compute the second term,
\begin{equation}\begin{split}
    \label{eq:tube1}
    \left. \frac{d}{dt}  \right|_{t=0}  \langle \bar \nabla_{e_i} e_j, \nu \rangle = \langle \bar \nabla_{\xi} \bar \nabla _{e_i}e_j, \nu \rangle - \langle \bar \nabla_{e_i} e_j , \nabla f \rangle.
\end{split}    
\end{equation}
Note that, as the ambient space is flat,
\begin{equation*}\begin{split}
 \label{eq:tube2}   
  \bar \nabla_{\xi} \bar \nabla _{e_i}e_j &= \bar \nabla_ {e_i} \bar \nabla_{\xi} e_j= \bar \nabla_{e_i} \bar \nabla_ {e_j} \xi=\\
 &= \bar \nabla_{e_i} \bar \nabla_{e_j}(f \nu)= \bar \nabla_{e_i} (e_j(f) \nu + f \bar \nabla_{e_j} \nu)= \\
 &=e_i (e_j(f)) \nu + e_j(f) \bar \nabla_{e_i} \nu +e_i(f) A(e_j)+ f \bar \nabla _{e_i}(A(e_j)).
 \end{split}\end{equation*}
Scalar multiplication with $\nu$ gives
\begin{equation}
    \begin{split}
        \label{eq:tube3}
        \langle  \bar \nabla_{\xi} \bar \nabla _{e_i}e_j, \nu \rangle = e_i (e_j(f))+ f\langle  A^2(e_j), e_i \rangle.
    \end{split}
\end{equation}
Combining \eqref{eq:tube1} and \eqref{eq:tube3} and using the definition of Hessian, 
\begin{equation}
    \label{eq:tubeformula}
     \left. \frac{d}{dt}  \right|_{t=0} A= - A^2f -\nabla ^2 f.
\end{equation}
Thus, using \eqref{eq:meancurv-anis4} and \eqref{eq:tubeformula} in \eqref{eq:meancurv-anis3}, one gets
\begin{equation} \begin{split}
    \label{eq:2variation}
     \left. \frac{d}{dt}\right|_{t=0} H_\varphi =\operatorname{tr}_M(- \psi (\nu) \nabla^2 f - \psi (\nu) A^2 f - \langle D \psi (\nu), \nabla f \rangle  A).
\end{split} \end{equation}
Integrating over $M$,
\begin{equation*}
    \int_M \operatorname{tr}_M(- \psi(\nu) \nabla^2 f)= \int_M \langle \nabla f, \psi(\nu) \nabla f \rangle + f \langle D \psi (\nu), \nabla f \rangle A.
\end{equation*}
Finally, as the variation is volume-preserving (and this, in particular, implies $\int_M f \, d \mu=0$),
\begin{equation}
    \begin{split}
        \label{eq:2variation5}
     \left. \frac{d^2}{dt^2}  \right|_{t=0} \mathcal A_ \varphi (M_t) &= \int_M \langle \nabla f , \psi(\nu) \nabla f \rangle - \operatorname{tr}_M (\psi(\nu)A^2)f^2.
     \end{split}\end{equation}

In complete analogy with the isotropic case, we distinguish between two notions of stability.
A CAMC hypersurface is said to be \emph{strongly stable} if its second variation \eqref{eq:2variation5} is nonnegative
for every $f \in C_c^1(M)$. It is said to be \emph{weakly stable} if the same condition holds
for every $f \in C_c^1(M)$ satisfying $\int_M f\,d\mu=0$.

For anisotropic minimal hypersurfaces, the natural notion is stability, since no volume constraint is imposed.
For nonzero constant anisotropic mean curvature hypersurfaces, the natural notion is weak stability,
corresponding to volume-preserving variations.(see e.g. \cite{palmer} for an anisotropic analogue of \cite{barbosadocarmo}).

Let $\lambda_n^{\min}$ and $\lambda_n^{\max}$ denote respectively the smallest and largest
eigenvalues of $D^2\varphi(\nu)$. Under the pinching condition
\eqref{eq:pinching-intro-lambda}, the stability inequality above implies
\begin{equation}
    \label{stability}
    \int_M |\nabla f|^2 \ge \int_M \frac{1}{\Lambda_n} |A|^2 f^2,
\end{equation}
for the same class of test functions, where
\[
\Lambda_n=\frac{\lambda_n^{max}}{\lambda_n^{min}}.
\]

\begin{definition}
A CAMC hypersurface with pinching condition \eqref{eq:pinching-intro-lambda} is said to be \emph{stable} if \eqref{stability} is satisfied for each $f \in C_c^1(M)$, \emph{weakly stable} if \eqref{stability} is satisfied for each $f \in C_c^1(M)$ with $\int_M f \,d\mu=0$.
\end{definition}

Throughout the paper, all hypersurfaces are assumed to satisfy the pinching condition \eqref{eq:pinching-intro-lambda}.

Moreover we remark that, in what follows, we frequently use the fact that if a CAMC hypersurface $M$ is weakly stable, then there exists a compact set 
$K \subset M$ such that $M$ is strongly stable outside $K$. Proof of this statement for isotropic minimal hypersurfaces can be found in \cite[Proposition 3]{cmcbellettini}, and the same argument applies unchanged in the present paper.

The stability inequality \eqref{stability} allows us to obtain the following Sobolev type inequality, which is a fundamental tool in our setting. 
\begin{teorema}
\label{th:sobolev}
Assume that $M$ is a complete $n$-dimensional weakly stable hypersurface with constant anisotropic mean curvature $H_\varphi \neq 0$ immersed in $\mathbb R^{n+1}$. Suppose that $M$ satisfies the uniform ellipticity assumption \eqref{eq:pinching-intro-lambda}.\\
Then, there exists a positive constant $C(n, \varphi)$ and a compact subset $K \subset M$ such that for each $f \in C_c^1(M \setminus K)$, the following Sobolev inequality holds.
\begin{equation}
\label{eq:sobolev}
\left( \int_M |f|^{\frac{2n}{n-1}} \right)^{\frac{n-1}{n}}
\le C(n, \varphi) \int_M |\nabla f|^2.
\end{equation}   
If $M$ is strongly stable, $K = \emptyset$.
\end{teorema}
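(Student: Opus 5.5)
Since $M$ is weakly stable, I first fix a compact set $K\subset M$ outside of which $M$ is strongly stable, as recalled above via \cite[Proposition 3]{cmcbellettini}. If $M$ is already strongly stable I take $K=\emptyset$. So it suffices to prove \eqref{eq:sobolev} for $f\in C_c^1(M\setminus K)$ using \eqref{stability} with no constraint on $\int f$. The argument combines three ingredients: the Michael--Simon (Hoffman--Spruck) Sobolev inequality on hypersurfaces of $\mathbb R^{n+1}$, the stability inequality, and a pointwise lower bound on $|A|$ coming from $H_\varphi\neq 0$.

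\emph{Lower bound on $|A|$.} By \eqref{eq:meancurv-anis2}, $H_\varphi=\operatorname{tr}_M(\psi(\nu)A)$, and $\psi(\nu)$ restricted to $T M=\nu^\perp$ has eigenvalues in $[\lambda_n^{\min},\lambda_n^{\max}]$. Cauchy--Schwarz for the trace gives $|H_\varphi|\le |\psi(\nu)|_{TM}|\,|A|\le \sqrt n\,\lambda_n^{\max}|A|$. Since $H_\varphi$ is a nonzero constant, this yields $|A|^2\ge c_0:=H_\varphi^2/(n(\lambda_n^{\max})^2)>0$ everywhere on $M$. Similarly $|H|\le \sqrt n|A|$ pointwise for the Euclidean mean curvature.

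\emph{Michael--Simon step.} For $g\in C^1_c(M)$, the Michael--Simon inequality gives
\[
\Big(\int_M |g|^{\frac{n}{n-1}}\Big)^{\frac{n-1}{n}}\le C_n\int_M \big(|\nabla g|+|H||g|\big).
\]
I apply it with $g=f^2$, which is the natural choice for producing the exponent $\frac{2n}{n-1}$. This gives
\[
\Big(\int |f|^{\frac{2n}{n-1}}\Big)^{\frac{n-1}{n}}\le C_n\Big(2\int |f||\nabla f| + \sqrt n\int |A| f^2\Big).
\]

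\emph{Closing the estimate.} The main obstacle is that the right-hand side is of the form $\int |f||\nabla f|$ and $\int|A|f^2$, which are not directly controlled by $\int|\nabla f|^2$. Stability only controls $\int|A|^2f^2$, and there is no $L^2$ bound on $f$ itself. This is exactly where $H_\varphi\neq0$ is used, which explains why the minimal case is excluded. From the lower bound, $\int f^2\le c_0^{-1}\int|A|^2f^2\le c_0^{-1}\Lambda_n\int|\nabla f|^2$ by \eqref{stability}. Likewise $\int|A|f^2\le c_0^{-1/2}\int|A|^2f^2\le c_0^{-1/2}\Lambda_n\int|\nabla f|^2$. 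By Cauchy--Schwarz, $\int|f||\nabla f|\le (\int f^2)^{1/2}(\int|\nabla f|^2)^{1/2}\le (\Lambda_n/c_0)^{1/2}\int|\nabla f|^2$. Combining these gives \eqref{eq:sobolev} with $C(n,\varphi)=C_n\big(2\sqrt{\Lambda_n/c_0}+\sqrt n\,\Lambda_n/\sqrt{c_0}\big)$, which depends only on $n$, $\varphi$, and the constant $H_\varphi$.

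Some care is needed with the regularity of $f^2$ when $f$ is only $C^1_c$. The Michael--Simon inequality holds for Lipschitz compactly supported functions, so $g=f^2$ is admissible.
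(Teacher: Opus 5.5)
Your proposal is correct and follows essentially the same route as the paper: the pointwise bound $|A|^2\ge H_\varphi^2/(n(\lambda_n^{\max})^2)$ turns the stability inequality, outside the compact set $K$, into an $L^2$ bound $\int f^2\le C\int|\nabla f|^2$, and the Michael--Simon inequality applied to $f^2$ is then closed by Cauchy--Schwarz and stability. The only cosmetic difference is that you control $\int|H|f^2$ through the pointwise estimate $|A|\le |A|^2/\sqrt{c_0}$ instead of Cauchy--Schwarz. You also rightly point out that the resulting constant depends on $H_\varphi$ as well as on $n$ and $\varphi$.
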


\begin{proof}
By the definition of the anisotropic mean curvature and the Cauchy--Schwarz
inequality for tensors, we have
\begin{equation*}
\label{eq:sobolev-estimate1}
    |H_\varphi|
    =
    |\operatorname{tr}(\psi(\nu)A)|
    \le
    |\psi(\nu)||A|
    \le
    \sqrt n\,\lambda_n^{\max}|A|.
\end{equation*}
Then,
\begin{equation*}
    \label{eq:sobolev-estimate2}
    |A|^2 \ge \frac{H_\varphi^2}{n (\lambda_n^{max})^2}.
\end{equation*}
Choosing 
\begin{equation*}
    \label{eq:sobolev-estimate3}
    C(n, \varphi)= \frac{H_\varphi^2}{n (\lambda_n^{max})^3} \lambda_n^{min},
\end{equation*}
one gets from the stability inequality \eqref{stability},
\begin{equation}
    \label{eq:sobolev-estimate4}
    \int_M |\nabla f|^2 \ge C(n, \varphi) \int_M f^2,
\end{equation}
for all $f \in C^\infty _c (M \setminus K)$.\\
 From the Michael-Simon Sobolev inequality (see \cite{michealsimon}), we get that there exists a constant $C(n)$ such that for any 
\( f \in C^1_c(M) \),
\begin{equation}
\label{eq:sobolev1}
C(n) \left( \int_M |f|^{\frac{n}{n-1}} \right)^{\frac{n-1}{n}}
\le \int_M \big( |\nabla f| + |fH| \big),
\end{equation}
where $H$ is the classical mean curvature.
Replacing \( f \) by \( f^{2} \), we find
\begin{equation}\label{eq:sobolev2}
C(n) \left( \int_M |f|^{\frac{2n}{n-1}} \right)^{\frac{n-1}{n}}
\le \int_M 
 2|f| |\nabla f|
+ \int_M f^{2} |H|.
\end{equation}
First note that,
\begin{equation*}
    \label{eq:sobolev1.1}
    \int_M |f||\nabla f| \le \left(\int_M f^2\right)^\frac{1}{2} \left(\int_M |\nabla f|^2\right)^\frac{1}{2} \le C(n, \varphi) \int_M |\nabla f|^2, \forall f \in C^1_c(M \setminus K)
\end{equation*}
where the last inequality comes from \eqref{eq:sobolev-estimate4}.\\
 The second integral on the right hand side of \eqref{eq:sobolev2} is
\begin{equation}
    \label{eq:sobolev3}
    \int_M |f|^{2} |H| \le \left( \int_M |f|^2 \right)^\frac{1}{2} \left(\int_Mf^2 |H|^2\right)^\frac{1}{2} \le C(n,\varphi) \int_M | \nabla f |^2, \forall f \in C_c^1(M \setminus K),
\end{equation}
where the last inequality is the definition of stability and inequality \eqref{eq:sobolev-estimate4} (with a little abuse of notation, we denote the constant again by $C(n, \varphi)$).\\
Finally, \eqref{eq:sobolev2} and \eqref{eq:sobolev3} give the result.
\end{proof}
\begin{osservazione}
We point out that the Sobolev inequality above relies on the assumption
\(H_\varphi\neq 0\). In the anisotropic minimal case \(H_\varphi=0\), a stronger
Sobolev inequality 
\begin{equation}
\label{eq:sobolev-minime}
\left(\int_M |f|^{\frac{2n}{n-2}}\right)^{\frac{n-2}{n}}
\le C(n,\varphi)\int_M |\nabla f|^2,
f\in C_c^1(M), n \ge 3. 
\end{equation}
is available assuming uniform ellipticity of the anisotropy, as proved by
Chodosh and Li \cite{chodoshli}. When \(H_\varphi\neq0\), (weak stability) this stronger inequality \eqref{eq:sobolev-minime}
can also be obtained in dimensions \(n\ge3\). However, the advantage of the
weaker inequality proved here is that it also covers the two-dimensional case.
\end{osservazione}

\section{One-endedness of CAMC hypersurfaces}
This section is devoted to the proof of our main theorem. The overall strategy follows the approach of Cao, Shen, and Zhu. 

First, using Theorem \ref{th:sobolev}, we prove that each end of the hypersurface has infinite volume (Theorem \ref{th:infinitevol}). Second, under the assumption that $M$ has at least two ends, we deduce the existence of a bounded harmonic function with finite energy (Proposition \ref{pr:caoshenzhu-lemma2}). 
Finally, a Bochner-type argument combined with stability rules out such functions, forcing the hypersurface to have only one end (Theorem \ref{th:fuli}).

We remark that in \cite{caoshenzhu}, stability was not needed to establish the infiniteness of the volume of the ends (Theorem \ref{th:infinitevol}) or the existence of a non-constant harmonic function (Proposition \ref{pr:caoshenzhu-lemma2}), since minimal hypersurfaces already satisfy a suitable Sobolev inequality. It was, however, required to rule out the existence of such harmonic functions (see Lemma 3 in \cite{caoshenzhu} or \cite{schoenyau}).

\begin{teorema}
\label{th:infinitevol}
Suppose \( M \to \mathbb{R}^{n+1} \) is a two-sided, complete, weakly stable immersion of a CAMC hypersurface, and let \( K \) be a compact subset of \( M \). 
Then each unbounded component of \( M \setminus K \) has infinite volume. In particular, each end of $M$ has infinite volume.\end{teorema}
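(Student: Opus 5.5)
The plan is the standard Sobolev-inequality argument for volume growth, using Theorem \ref{th:sobolev} in place of the Michael--Simon inequality for minimal hypersurfaces.

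First enlarge $K$ so that it contains the compact set $K_0$ outside of which $M$ is strongly stable; this is harmless, because an unbounded component of the original $M\setminus K$ contains an unbounded component of $M\setminus (K\cup K_0)$. The case $H_\varphi\neq0$ uses \eqref{eq:sobolev}, with exponent pair $(2n/(n-1),2)$. The case $H_\varphi=0$ uses \eqref{eq:sobolev-minime} (Chodosh--Li) when $n\ge3$. For $n=2$ with $H_\varphi=0$ I would fall back on Michael--Simon applied to $f^2$, which controls the $L^{2n/(n-1)}$ norm by $\int |f||\nabla f|+\int f^2|H|$; there $|H|\le C|A|$ and stability absorbs $\int f^2|A|^2$ into $\int|\nabla f|^2$. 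In every case, after Cauchy--Schwarz, we obtain an inequality of the form
\[
\Big(\int_M |f|^{q}\Big)^{2/q}\le C\int_M |\nabla f|^2+C\int_M f^2, \qquad f\in C^1_c(M\setminus K),
\]
with some $q>2$.

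Let $E$ be an unbounded component of $M\setminus K$ and fix $p\in E$. Write $B(R)$ for intrinsic balls about $p$ and $V(R)=\mathrm{Vol}(E\cap B(R))$. Argue by contradiction, assuming $\mathrm{Vol}(E)<\infty$. Since $E$ is unbounded and $M$ is complete, for every large $R$ the sphere $\partial B(R)$ meets $E$. For $R$ large and $r\le 1$, choose a point $x\in E$ with $d(x,K)\ge R$. The ball $B_x(r)$ is then disjoint from $K$. Insert the cutoff $f$ that equals $1$ on $B_x(r/2)$, vanishes outside $B_x(r)$, and has $|\nabla f|\le 2/r$. With $v(r)=\mathrm{Vol}(B_x(r))$ this gives
\[
v(r/2)^{2/q}\le C\,(r^{-2}+1)\,v(r)\le C r^{-2}v(r).
\]
Iterating this along $r_k=2^{-k}$ is the step I expect to be the main obstacle, because it only yields a lower bound $v(1)\ge c>0$ if one also knows small balls are roughly Euclidean, $v(r_k)\approx c\,r_k^n$. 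That holds locally, but the constants depend on the point, since no curvature bound is available.

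The cleaner route I would try first goes as follows. Take $f=1$ on $E\cap (B(R+1)\setminus B(R))$, extended with slope $1$ to zero on the annuli of width $1$ on either side. This gives
\[
\big(V(R+1)-V(R)\big)^{2/q}\le C\,\big(V(R+2)-V(R-1)\big).
\]
By finiteness of volume, the right-hand side tends to $0$, so the left-hand side is eventually much smaller than the right-hand side; since $2/q<1$, the two sides are comparable to powers of the same annulus volumes. Setting $a_R=V(R+2)-V(R-1)$, we get $a_{R}\ge c\,a_{R+3}^{2/q}$ for a block structure. Because $2/q<1$ and the $a_R$ are small, this forces $a_{R+3}\le C a_R^{q/2}$, i.e.\ super-geometric decay. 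I would then combine this with a lower bound on the volume of each annulus, which comes from $E$ being connected and unbounded: it contains a point on each $\partial B(R)$, together with a unit ball around it away from $K$. The lower bound is obtained via the same Sobolev inequality applied to a bump centered at such a point, together with the fact that $v(r)>0$ can only decrease boundedly along the iteration, which is a De Giorgi-type argument: the bound $v(r/2)^{2/q}\le C r^{-2} v(r)$ iterated with $r=2^{-k}$ and $q>2$ gives $v(1)\ge c(n,\varphi)>0$. This lower bound contradicts $a_R\to0$, so $\mathrm{Vol}(E)=\infty$. Applying this to every end finishes the proof.
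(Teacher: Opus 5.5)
Your argument is correct in substance and takes a different route from the paper, but two parts of the write-up need repair.

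First, the ``main obstacle'' you flag is not actually an obstacle. You should close it explicitly rather than appeal to a De Giorgi-type principle. With $\theta=2/q<1$, iterating $v(r/2)^{\theta}\le Cr^{-2}v(r)$ along $r_k=2^{-k}$ gives, for every $k$,
\[
v(1)\ \ge\ C^{-\sum_{j<k}\theta^j}\,4^{-\sum_{j<k}j\theta^j}\,v(2^{-k})^{\theta^k}.
\]
Smoothness of the immersion gives $v(\rho)\ge c_x\rho^n$ for $\rho<\rho_x$, with constants depending on $x$. Then $v(2^{-k})^{\theta^k}\ge c_x^{\theta^k}2^{-kn\theta^k}\to1$. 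The point-dependent constants are killed by the exponent $\theta^k\to0$, so $v(1)\ge c(n,\varphi)$ uniformly. Equivalently, you can avoid shrinking radii to zero: take cutoffs between radii $1$ and $1/2$ and let the integrability exponent grow, which is Lemma \ref{lemma:moser} with $u\equiv1$ at scale $1$.

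Second, drop the annulus ``cleaner route''. Your displayed inequality compares an annulus with the nested annulus containing it, so it does not yield $a_R\ge c\,a_{R+3}^{2/q}$ for disjoint shifted blocks. In any case, super-geometric decay is perfectly compatible with $\mathrm{Vol}(E)<\infty$, so it cannot produce a contradiction. The uniform unit-ball bound alone finishes the proof. Fix $p\in E$ and choose $x_j\in E$ with $d(p,x_j)=3j$ for $j$ large; this is possible since $E$ is connected and unbounded. The balls $B_{x_j}(1)$ are pairwise disjoint and connected, and for large $j$ they avoid $K$. Hence they lie in $E$, each with volume at least $c$.

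Compared with the paper: the paper applies Lemma \ref{lemma:moser} to $u\equiv1$ on a single large ball $B(p,r)\subset E$. This gives $|B(p,r)|\ge c\,r^{2n}$, and taking $r$ large contradicts $|E|<V$. That argument needs \eqref{eq:sobolev} in homogeneous form, with no $\int f^2$ term, at all scales. For $H_\varphi\neq0$ this comes from absorbing the $L^2$ term via the Poincaré inequality \eqref{eq:sobolev-estimate4}. Because the Moser radii only shrink from $r$ to $r/2$ while $s_k\to\infty$, the small-ball issue you worried about never arises there.

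Your route works only at scales $\le1$, so it tolerates the inhomogeneous inequality and treats all cases uniformly. This includes $n=2$, $H_\varphi=0$, which is covered neither by Theorem \ref{th:sobolev} (it assumes $H_\varphi\neq0$) nor by \eqref{eq:sobolev-minime} (it needs $n\ge3$); the paper handles the minimal case only by citing Chodosh--Li. The price is that you get just a uniform lower bound on unit balls rather than a growth rate, but that is all the statement requires.
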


 We remark that the finiteness of the end volume is proved by Frensel for a hypersurface $M$ with mean curvature vector field bounded in norm immersed in a manifold $N$ with bounded geometry, using a different technique (see \cite[Chapter 3, Theorem 3]{frensel}). 
 
 Moreover, for $H_\varphi=0$ (strong stability), an analogous result is proved in \cite[Corollary 19]{chodoshli} for all dimensions.

To prove Theorem \ref{th:infinitevol} we need the following technical lemma, that is a direct application of Moser iteration and the Sobolev inequality \eqref{eq:sobolev}. 
 \begin{lemma}
 \label{lemma:moser}
     Assume that \(u \ge 0\) is subharmonic (i.e., \(\Delta u \ge 0\)). Then there exists a constant $C(n, \varphi)$ such that
\begin{equation}\label{eq:moser}
|| u ||_{L^ \infty B(p, \frac{1}{2} r)}\;\le\; C(n, \varphi) \, r^{-n} || u||_{L^2(B(p, r))}.
\end{equation}
 \end{lemma}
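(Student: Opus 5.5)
We prove \eqref{eq:moser} by Moser iteration built on the Sobolev inequality \eqref{eq:sobolev} of Theorem \ref{th:sobolev}. Throughout, $B(p,r)$ is assumed to lie in the region where \eqref{eq:sobolev} holds (i.e.\ in $M\setminus K$, or anywhere if $M$ is strongly stable). Set $\chi=\frac{n}{n-1}>1$, so that \eqref{eq:sobolev} reads $\|g\|_{L^{2\chi}}^2\le C\|\nabla g\|_{L^2}^2$ for $g\in C^1_c(M\setminus K)$. The loss $\chi=\frac{n}{n-1}$, rather than $\frac{n}{n-2}$, is what forces the exponent $r^{-n}$ in \eqref{eq:moser} instead of the scale-invariant $r^{-n/2}$. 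Since $u\ge0$ is subharmonic, for every $q\ge 1$ the function $u^q$ satisfies a Caccioppoli inequality. To obtain it, test $\Delta u\ge 0$ against $\eta^2 u^{2q-1}$, with $\eta$ a Lipschitz cutoff supported in $B(p,\rho)$, equal to $1$ on $B(p,\rho')$, and with $|\nabla\eta|\le 2/(\rho-\rho')$. Integrating by parts and absorbing the cross term gives
\begin{equation*}
\int_M |\nabla(\eta u^{q})|^2 \le C q^{2}\int_M |\nabla \eta|^2 u^{2q}.
\end{equation*}
If $u$ is not smooth enough for $u^{2q-1}$ to be used directly, we first work with $u+\epsilon$ or with truncations, and then let the parameter tend to its limit.

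Combining this with \eqref{eq:sobolev} applied to $g=\eta u^q$ gives the reverse Hölder step
\begin{equation*}
\Big(\int_{B(p,\rho')} u^{2q\chi}\Big)^{1/\chi}\le \frac{C q^2}{(\rho-\rho')^2}\int_{B(p,\rho)} u^{2q}.
\end{equation*}
We iterate with $q_k=\chi^k$, radii $\rho_k=\frac r2+\frac r{2^{k+1}}$, and $\rho_k-\rho_{k+1}=r2^{-k-2}$. After taking the power $1/(2q_k)$ at step $k$, we obtain
\begin{equation*}
\|u\|_{L^\infty(B(p,r/2))}\le \prod_{k\ge0}\big(C\chi^{2k}4^{k}r^{-2}\big)^{\frac{1}{2\chi^k}}\,\|u\|_{L^2(B(p,r))}.
\end{equation*}
The factors $\chi^{2k}$ and $4^k$ contribute a finite constant, since $\sum k\chi^{-k}<\infty$. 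The power of $r$ is
\begin{equation*}
r^{-\sum_k \chi^{-k}} = r^{-\frac{1}{1-1/\chi}} = r^{-n},
\end{equation*}
which is exactly the exponent in \eqref{eq:moser}.

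The main obstacle is the scaling bookkeeping. Inequality \eqref{eq:sobolev} is not scale invariant: its constant involves $H_\varphi$, coming from \eqref{eq:sobolev-estimate4}. As a result the product of constants yields $C^{\sum\chi^{-k}/2}r^{-n}$, and we must check that this matches the stated form with a constant depending only on $n$ and $\varphi$ (and implicitly on $H_\varphi$). This works provided we keep the cutoff-gradient factor $r^{-2}$ separate from the Sobolev constant, and do not attempt to rescale.

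A second point to handle is the support condition. The iteration only uses test functions supported in $B(p,r)$, so we need $B(p,r)\cap K=\emptyset$. This is how the lemma will be applied in the proof of Theorem \ref{th:infinitevol}, on balls inside an unbounded component of $M\setminus K$ far from $K$. If $M$ is strongly stable, $K=\emptyset$ and no restriction is needed.
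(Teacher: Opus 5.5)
Your proposal is correct and follows essentially the same route as the paper: a Caccioppoli inequality obtained by testing $\Delta u\ge 0$ against $\eta^2u^{s-1}$, combined with the non-scale-invariant Sobolev inequality \eqref{eq:sobolev} and iterated with exponents $s_k=2(\tfrac{n}{n-1})^k$, so that $\sum_k 2/s_k=n$ produces the factor $r^{-n}$. Your bookkeeping is slightly more careful than the paper's: you keep the $2^{k}/r$ size of the cutoff gradients and check that the resulting product of constants converges, and you make explicit that $B(p,r)$ must avoid $K$ and that $H_\varphi\neq 0$ is being used.
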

 \begin{proof}
    Since the function $u$ is subharmonic, for every test function $\eta \in C^ \infty _c (M \setminus K)$, we get
    \begin{equation}
    \label{eq:moser1}
     0 \le \int_{M} (\Delta u) \eta = - \int_{M } \langle \nabla \eta, \nabla u \rangle.  
    \end{equation}
   We replace $\eta$ with $ u^{s-1} \eta^2$, $s > 1$. Then, \eqref{eq:moser1} is
    \begin{equation}
    \label{eq:moser2}
        0 \le \int_{M} (\Delta u) u ^{s-1} \eta^2 = - \int_{M} \langle \nabla u , \nabla (u^{s-1}\eta^2) \rangle= -\int_{M} (s-1) u^{s-2} | \nabla u|^2 \eta^2 - 2 \int_{M} u ^{s-1} \eta \langle \nabla u , \nabla \eta \rangle.
\end{equation}
    Note that  
    \begin{equation}
        \label{eq:moser4}
         | \nabla (u ^{\frac{s}{2}})|^2= \frac{s^2}{4} u^{s-2} |\nabla u|^2,
    \end{equation}
    and using \eqref{eq:moser4} in \eqref{eq:moser2} we obtain
    \begin{equation}
        \label{eq:moser6}
        \int_{M} |\nabla (u^{\frac{s}{2}})|^2 \eta^2  \le C(s) \int_{M} | \nabla (u^{\frac{s}{2}})| u^{\frac{s}{2}}\eta |\nabla \eta|.
    \end{equation}
Now, applying H\"older's inequality to the term on the right hand side of \eqref{eq:moser6}, we get
\begin{equation*}
    \label{eq:moser7}
  \int_{M} | \nabla (u^{\frac{s}{2}})| u^{\frac{s}{2}}\eta |\nabla \eta| \le \left( \int_{M}  |\nabla (u^{\frac{s}{2}})|^2 \eta^2 \right)^\frac{1}{2} \left(\int_{M} |\nabla \eta|^2 u^s\right)^\frac{1}{2}.
\end{equation*}
Thus,
\begin{equation}
    \begin{split}
        \label{eq:moser8}
     \int_{M}   |\nabla (u^{\frac{s}{2}})|^2 \eta^2 \le C(s) \int_{M} |\nabla \eta|^2 u^s
    \end{split}
\end{equation}
Note that
    \begin{equation*}
        \begin{split}
            \label{eq:moser10}
            |\nabla (u^ \frac{s}{2} \eta)|^2 = |\nabla (u^ \frac{s}{2})\eta + u^ \frac{s}{2} \nabla \eta|^2 \le 2 \left(|\nabla (u^\frac{s}{2})|^2 \eta ^2 + u^s |\nabla \eta|^2\right),
        \end{split}
    \end{equation*}
    and thus
    \begin{equation}
        \label{eq:moser11}
        |\nabla (u^\frac{s}{2})|^2 \eta^2 \ge \frac{1}{2}|\nabla (u^\frac{s}{2}\eta)|^2 - u^s |\nabla \eta|^2.
    \end{equation}
    Combining \eqref{eq:moser8} and \eqref{eq:moser11} one obtains
    \begin{equation}
        \label{eq:moser12}
        \int_{M} |\nabla (u ^\frac{s}{2} \eta)|^2 \le C(s) \int_{M}|\nabla \eta|^2 u^s.
    \end{equation}
    Now, we apply Sobolev inequality \eqref{eq:sobolev} with $f= u ^\frac{s}{2} \eta$ to the left hand side of \eqref{eq:moser12}, and we get
    \begin{equation}
        \begin{split}
            \label{eq:moser12.1}
           \left( \int_{M} (u^\frac{s}{2} \eta) ^\frac{2n}{n-1} \right)^ \frac{n-1}{n}  \le C(n, \varphi, s) \int_{M} |\nabla \eta|^2u^s,
        \end{split}
    \end{equation}
    with $C(n, \varphi, s)$ a suitable constant.
We observe that the left hand side can be written as
\begin{equation*}
    \label{eq:moser13}
    \left (\int_{M} u ^{s \frac{n}{n-1}} \eta^ \frac{2n}{n-1}\right)^ \frac{n-1}{n}
\end{equation*}
and we define the following quantities
\begin{equation*}\label{eq:moser-sk}
    \begin{cases}
       s_{k+1} = \frac{n}{n-1}s_k,\\
s_0= 2.
    \end{cases}
\end{equation*}
\begin{equation}
    r_k= r- \sum_{j=0}^k 2^{-(j+2)}r,
\end{equation}
and
\begin{equation*}
\label{eq:moser-etak}
\eta_k =
\begin{cases}
1 & \text{on } B(p,r_{k+1}),\\[4pt]
0 & \text{on } (M\setminus K)\setminus B(p,r_k)
\end{cases}
\quad\text{with}\quad
|\nabla \eta_k| \le \frac{c}{r}.
\end{equation*}
Evaluating \eqref{eq:moser12.1} in $s=s_k$, $r=r_k$ and $\eta=\eta_k$, one gets
\begin{equation*}
\begin{split}
 \label{eq:moser14}   
 \left( \int_{B(p, r_{k+1})}u^{s_{k+1}}\right)^ \frac{n-1}{n} \le C(n, \varphi,s)\left(\frac{1}{r_k}\right)^2 \int_{B(p, r_k)}u ^{s_k},  \end{split}
\end{equation*}
and taking the $\frac{1}{s_k}$-th power and denoting again, by a slight abuse of notation, the constant by $C(n, \varphi, s)$, one obtains
\begin{equation*}
    \label{eq:moser15}
    \left(\int_{B(p, r_{k+1})} u ^{s_{k+1}}\right)^ \frac{1}{s_{k+1}} \le  C(n, \varphi, s) \left(\frac{1}{r_k}\right)^ \frac{2}{s_k} \left(\int_{B(p, r_k)}u^{s_k}\right)^\frac{1}{s_k}, 
\end{equation*}
that is,
\begin{equation}
    \label{eq:moser15.1}
    ||u||_{L^{s_{k+1}}(B(p, r_{k+1}))} \le C(n, \varphi, s) \left(\frac{1}{r_k}\right)^\frac{2}{s_k} ||u||_{L^{s_k}(B(p, r_k))}.
\end{equation}
Iterating \eqref{eq:moser15.1} 
\begin{equation}
    \label{eq:moser16}
     ||u||_{L^{s_{k+1}}(B(p, r_{k+1}))} \le C(n, \varphi, s) \prod_{j=0}^k \left(\frac{1}{r_j}\right)^\frac{2}{s_j} ||u||_{L^{s_0}(B(p, r_0))}.
\end{equation}
Note that
\begin{equation}
    \label{eq:moser16.1}
    s_j= s_0 \, \left(\frac{n}{n-1}\right)^j
\end{equation}
and so
\begin{equation*}
    \label{eq:moser17}
    \sum_{j=0}^\infty \frac{2}{s_j}= \frac{2}{s_0} \sum_{j=0}^\infty\left(\frac{n-1}{n}\right)^j=  \, \frac{1}{1-\frac{n-1}{n}}=  n.
\end{equation*}
Thus, taking the limit for $k \to \infty$ of \eqref{eq:moser16}, we obtain
\begin{equation*}
    \label{eq:moser18}
    ||u||_{L^\infty(B(p, \frac{r}{2}))} \le C(n, \varphi) r^{-n} ||u||_{L^2(B(p, r))}.
\end{equation*}
\end{proof}

\textit{Proof of Theorem \ref{th:infinitevol}}
 Let \( E \) be an unbounded component of \( M \setminus K \). Suppose by contradiction
that $E$ has finite volume, i.e. \( |E| < V < \infty \). Let $C(n, \varphi)$ be as in Lemma \ref{lemma:moser}. We can choose \( r \) sufficiently large so that \( C(n, \varphi)r^{2n} > V \).
Since $E$ is unbounded and $M$ is complete, there exists \( p \in E \) such that
\[
d_M(p,\partial E) > r.
\]
Then we have
\begin{equation*}
\label{eq:infvol}
V > |E| > |B_M(p,r)| \ge C(n, \varphi)r^{2n} > V,
\end{equation*}
where the third inequality is Lemma \ref{lemma:moser} applied to $u=1$. This contradiction completes the proof.
\qed\\

We now use the result of \ref{th:infinitevol} to establish the existence of a bounded harmonic function with finite Dirichlet energy.

\begin{proposizione}
\label{pr:caoshenzhu-lemma2}
Let \(M^n\) be a complete noncompact two-sided CAMC hypersurface satisfying the
hypotheses of Theorem \ref{th:sobolev}. Assume that \(M\) has at least two ends.
Then there exists a nonconstant bounded harmonic function \(u\) on \(M\) such that
\[
    \int_M |\nabla u|^2 < \infty .
\]
\end{proposizione}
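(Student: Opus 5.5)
We follow the classical construction of Cao--Shen--Zhu (going back to Li--Tam), with Theorem \ref{th:infinitevol} providing the non-parabolicity input. Fix a compact set $D\subset M$ large enough to contain the compact set $K$ of Theorem \ref{th:sobolev}, chosen so that $M\setminus D$ has at least two unbounded components $E_1$ and $E_2$; this is possible since $M$ has at least two ends. Exhaust $M$ by an increasing sequence of smooth relatively compact domains $\Omega_R\supset D$, for instance geodesic balls $B(p,R)$ suitably smoothed. On each $\Omega_R$ solve the Dirichlet problem
\[
\Delta u_R=0 \text{ in } \Omega_R,\qquad u_R=1 \text{ on } \partial\Omega_R\cap E_1,\qquad u_R=0 \text{ on } \partial\Omega_R\setminus E_1 .
\]
By the maximum principle $0\le u_R\le 1$.

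The main step is a uniform Dirichlet energy bound. The minimizing property of harmonic functions gives $\int_{\Omega_R}|\nabla u_R|^2\le \int_{\Omega_R}|\nabla w|^2$ for any competitor $w$ with the same boundary values. This is not immediate, because the natural competitors are cutoffs at infinity, whose energy need not be bounded unless the ends are non-parabolic. So we first prove that each $E_i$ is non-parabolic. The plan is to use the Sobolev inequality \eqref{eq:sobolev} on $M\setminus K$. Consider the harmonic functions $h_R$ on $E_i\cap\Omega_R$ with $h_R=1$ on $\partial E_i$ and $h_R=0$ on $\partial\Omega_R\cap E_i$, and suppose $h_R\to 1$. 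Apply \eqref{eq:sobolev} to $f=\eta(1-h_R)$, where $\eta$ cuts off near $\partial E_i$. This shows that $1-h_R$ having small energy forces its $L^{2n/(n-1)}$ norm to be small. But $1-h_R=1$ near infinity on regions of large volume, and by Theorem \ref{th:infinitevol} (and more quantitatively the volume growth $|B(p,r)|\ge Cr^{2n}$ from Lemma \ref{lemma:moser}) those regions have unbounded volume. This is a contradiction. Hence $h_R$ converges to a nonconstant barrier $h$ with finite energy, $0<h\le1$, and $\inf_{E_i} h=0$. With these barriers we build competitors: $w=1$ on $D$, $w=h^{(1)}$ on $E_1$ reversed appropriately, and so on. Equivalently, following Li--Tam, we bound $\int|\nabla u_R|^2$ by the sum of the capacities of $D$ relative to each end. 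These capacities are finite precisely because of non-parabolicity.

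Given the uniform bound $\int_{\Omega_R}|\nabla u_R|^2\le C$ and $0\le u_R\le1$, interior elliptic estimates (the local Moser bound of Lemma \ref{lemma:moser} together with standard Schauder theory) give a subsequence converging in $C^2_{loc}$ to a harmonic $u$ on $M$. This $u$ satisfies $0\le u\le1$, and by lower semicontinuity $\int_M|\nabla u|^2\le C$.

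It remains to show that $u$ is nonconstant. On $E_2$ we compare with the barrier: the maximum principle gives $u_R\le 1-h^{(2)}_R$-type comparisons, more precisely $u_R\le \sup_{\partial E_2}u_R\cdot(1-h^{(2)})$-style bounds. From these we get $\liminf_{E_2\ni x\to\infty}u=0$. Symmetrically, $\limsup_{E_1\ni x\to\infty}u=1$, using $1-u_R$ and the barrier on $E_1$. So $u$ takes different asymptotic values on the two ends and is nonconstant.

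I expect the main obstacle to be the non-parabolicity step. Infinite volume alone does not imply non-parabolicity, and one must genuinely exploit \eqref{eq:sobolev} with exponent $2n/(n-1)$ on the ends outside $K$ to show that the capacity of $\partial E_i$ in $E_i$ is positive.
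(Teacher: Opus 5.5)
Your overall strategy differs from the paper's: you show each end is non-parabolic, then use harmonic-measure barriers and the maximum principle to force $u$ toward $0$ in $E_2$ and toward $1$ in $E_1$. This Li--Tam route can be made to work, but the step you single out as the crux fails as written.

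The function $f=\eta(1-h_R)$ is not admissible in \eqref{eq:sobolev}. On $\partial\Omega_R\cap E_i$ you have $1-h_R=1$ and $\eta=1$, so $f$ does not vanish on $\partial\Omega_R$ and cannot be extended to a compactly supported test function. The claimed contradiction is not there either: if $h_R\to1$, then $1-h_R\to0$ on every compact set, so there is no large-volume region on which $1-h_R=1$. You have swapped $h_R$ and $1-h_R$.

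The correct test function is $f=\eta h_R$, which vanishes near $\partial E_i$ and on $\partial\Omega_R$. Then \eqref{eq:sobolev} gives $\|\eta h_R\|_{L^{2n/(n-1)}}^2\le 2C\bigl(\int|\nabla\eta|^2+\int|\nabla h_R|^2\bigr)$, which is uniformly bounded because the capacities are nonincreasing in $R$. If $h_R\uparrow1$, Fatou gives $\int_{E_i}\eta^{2n/(n-1)}<\infty$, contradicting Theorem \ref{th:infinitevol}. No growth rate like $r^{2n}$ is needed.

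The same swap appears at the end. With your normalization, the maximum principle gives $u_R\le h^{(2)}_R\le h^{(2)}$ on $E_2\cap\Omega_R$, and $1-u_R\le h^{(1)}$ on $E_1\cap\Omega_R$. By contrast, $u_R\le s_R(1-h^{(2)})$ is false on $\partial E_2$. With these fixes and the standard fact $\inf_{E_i}h^{(i)}=0$ (compare $h_R$ with $(h-\delta)/(1-\delta)$), your argument closes.

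You have also misplaced where non-parabolicity is needed. The energy bound is free: any fixed Lipschitz function equal to $1$ on $E_1$ and to $0$ on the other components of $M\setminus D$, outside a compact set, is a competitor for every $\Omega_R$. The paper phrases this as the monotonicity $\int_{D_i}|\nabla u_i|^2\le\int_{D_k}|\nabla u_k|^2$, obtained by extending $u_k$ by $1$ on $E_1$ and by $0$ elsewhere. Likewise, the relative capacities $\int|\nabla h_R|^2$ are nonincreasing in $R$, hence always finite. Non-parabolicity means their limit is \emph{positive}, and this is used only for non-constancy.

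For non-constancy, the paper avoids barriers altogether. It applies \eqref{eq:sobolev} directly to $\eta u_i(1-u_i)$, which forces a constant limit to be $0$ or $1$. It then applies it to $\psi u_i$, with $\psi\equiv1$ on $E_2$ and $\psi\equiv0$ on the other components of $M\setminus D_{i_0}$. If $u\equiv1$, this gives $\operatorname{vol}(E_2)<\infty$; if $u\equiv0$, using $1-u_i$ on $E_1$ gives $\operatorname{vol}(E_1)<\infty$. Either way Theorem \ref{th:infinitevol} is contradicted.

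The paper's route is shorter and uses only Sobolev plus infinite volume on the approximating sequence. Yours proves more: each end is non-parabolic, and $u$ has asymptotic values $0$ and $1$ along $E_2$ and $E_1$. That is the form that generalizes to counting non-parabolic ends.
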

\begin{proof}
Let $p$ denote the Sobolev exponent
\[
p=
\begin{cases}
\dfrac{2n}{n-1}, & \text{if } H_\varphi\neq0,\\[1.2ex]
\dfrac{2n}{n-2}, & \text{if } H_\varphi=0 \text{ and } n\ge3.
\end{cases}
\]
From Theorem \ref{th:sobolev}, there exist a compact subset \(K\subset M\) and a
constant \(C(n, \varphi)>0\) such that
\begin{equation}
\label{eq:sobolev-outside-K}
    \left(\int_M |f|^p\right)^{2/p}
    \le C(n, \varphi)\int_M |\nabla f|^2,
    \qquad
    f \in C_c^1(M\setminus K).
\end{equation}

As in Lemma 2 of \cite{caoshenzhu}, we choose an exhaustion of \(M\) by
relatively compact smooth domains \(D_i\) such that
\[
    D_i\subset \subset  D_{i+1},
    \qquad
    \bigcup_i D_i=M.
\]
Up to increasing \(i_0\), we may also assume that
\[
    K\subset \subset  D_{i_0}.
\] 
Since \(M\) has at least two ends, Theorem \ref{th:infinitevol} implies that,
after increasing \(i_0\) if necessary, we may choose two distinct components
\(E_1\) and \(E_2\) of \(M\setminus D_{i_0}\), both of infinite volume.
\\For \(i\ge i_0\), let
\[
    \Gamma_{1,i}:=\partial D_i\cap E_1.
\]
Let \(u_i\) be the minimizer of the Dirichlet energy on \(D_i\) among all
functions with boundary values
\[
    u_i=1 \quad \text{on } \Gamma_{1,i},
    \qquad
    u_i=0 \quad \text{on } \Gamma_{j,i}, j\neq 1
\]
Equivalently, \(u_i\) solves
\[
\begin{cases}
    \Delta u_i=0 & \text{in }D_i,\\
    u_i=1 & \text{on }\Gamma_{1,i},\\
    u_i=0 & \text{on }\Gamma_{0,i}.
\end{cases}
\]
By the maximum principle,
\[
    0\le u_i\le 1 \qquad \text{on }D_i.
\]

We claim that the energies of \(u_i\) are uniformly bounded. Indeed, if \(i>k\),
we extend \(u_k\) to \(D_i\) by setting it equal to \(1\) on
\((D_i\setminus D_k)\cap E_1\) and equal to \(0\) on the remaining components of
\(D_i\setminus D_k\). This extension is an admissible competitor for the
minimization problem defining \(u_i\), and its energy is the same as that of
\(u_k\). Hence
\[
    \int_{D_i} |\nabla u_i|^2
    \le
    \int_{D_k} |\nabla u_k|^2.
\]
In particular, there exists a constant \(c>0\), independent of \(i\), such that
\begin{equation}
\label{eq:uniform-energy-ui}
    \int_{D_i} |\nabla u_i|^2 \le c .
\end{equation}
Thus, a subsequence, still denoted by \(u_i\), converges smoothly on compact
subsets of \(M\) to a harmonic function \(u\) with
\[
    0\le u\le 1 \quad \text{and } \int_M |\nabla u|^2 < c.
\]
We next show that this limiting harmonic function is nonconstant.\\ Choose a smooth cut-off function \(\eta\) such that
\[
    0\le \eta\le 1,\qquad
    \eta\equiv 0 \ \text{on a neighborhood of }K,
\]
and
\[
    \eta\equiv 1 \ \text{outside a larger compact set }K',
    \qquad
    |\nabla\eta|\le C.
\]
Set
\[
    f_i:=\eta u_i(1-u_i).
\]
Since \(u_i(1-u_i)=0\) on \(\partial D_i\), $f_i$ is an admissible test function in the Sobolev inequality. Thus, \eqref{eq:sobolev-outside-K} gives 
\[
    \left(\int_{D_i} |\eta u_i(1-u_i)|^p\right)^{2/p}
    \le
    C(n, \varphi) \int_{D_i} |\nabla(\eta u_i(1-u_i))|^2 .
\]
We now estimate the right-hand side. Since
\[
    \nabla f_i
    =
    \eta(1-2u_i)\nabla u_i
    +
    u_i(1-u_i)\nabla\eta,
\]
and \(0\le u_i\le1\), we have
\[
\begin{aligned}
    |\nabla f_i|^2
    &\le
    2\eta^2(1-2u_i)^2|\nabla u_i|^2
    +
    2u_i^2(1-u_i)^2|\nabla\eta|^2  \\
    &\le
    2\eta^2|\nabla u_i|^2
    +
    \frac18|\nabla\eta|^2 .
\end{aligned}
\]
This implies, from the definition of $\eta$ and \eqref{eq:uniform-energy-ui},
\begin{equation}
\label{eq:ui-limitato}    
\left(\int_{D_i} |\eta u_i(1-u_i)|^p\right)^{2/p}
    \le C_1 
\end{equation}
for a uniform constant $C_1$.

Suppose now that \(u\equiv c\) is constant. Passing to the limit in
\eqref{eq:ui-limitato} and using that
\(\operatorname{vol}(D_i)\to\infty\) we get that necessarily
\begin{equation}    
\label{eq:c=01}
    c=0 \quad \text{or} \quad c=1.
\end{equation}
Without loss of generality, it is enough to rule out the case
\(u\equiv1\), if \(u\equiv0\), we replace \(u_i\) by \(1-u_i\) and argue in the
same way.
Assume first that \(u\equiv0\). Replacing \(u_i\) by \(1-u_i\), we reduce to
the case \(u\equiv1\). Thus it remains to rule out this possibility.

We choose a smooth function \(\psi:M\to[0,1]\) such that
\[
    \psi\equiv0 \quad \text{in a neighborhood of }K,
\]
\[
    \psi\equiv1 \quad \text{on }E_2,
    \qquad
    \psi\equiv0 \quad \text{on every other component of }M\setminus D_{i_0}.
\]
In particular, \(\nabla\psi\) has compact support contained in \(D_{i_0}\setminus K\).

Set
\[
    g_i:=u_i\psi 
\]
and apply \eqref{eq:sobolev-outside-K}. We get
\[
    \left(\int_{D_i}|u_i \psi|^p\right)^{2/p}
    \le
    C(n, \varphi)\int_M |\nabla (u_i \psi)|^2 .
\]

Moreover,
\[
\begin{aligned}
    \int_{D_i} |\nabla(u_i\psi)|^2
    &\le
    2\int_{D_i}\psi^2|\nabla u_i|^2
    +
    2\int_{D_i}u_i^2|\nabla\psi|^2  \\
    &\le
    2\int_{D_i}|\nabla u_i|^2
    +
    2\int_M|\nabla\psi|^2  \\
    &\le C_2,
\end{aligned}
\]
where \(C_2\) is independent of \(i\). 

Since \(\psi\equiv1\) on \(E_2\), this gives
\[
    \int_{E_2\cap D_i} u_i^p \le C_2 .
\]
Passing to the limit along the exhaustion yields
\[
    \operatorname{vol}(E_2)\le C_2,
\]
which contradicts the infinite volume of \(E_2\).
\end{proof}

To prove Theorem \ref{th:main-intro} it remains to rule out the existence of such harmonic functions.

\begin{teorema}
\label{th:fuli}
 Let $M$ be a weakly stable constant anisotropic mean curvature hypersurface in $\mathbb R^{n+1}$, $n \le 6$ and assume that there exists a constant $\Lambda_n < \frac{n^2}{n^2-1}$ such that \begin{equation*}
    \label{eq:pinching-intro}
    |v|^2 \le D^2 \varphi (\nu)(v,v) \le \Lambda_n|v|^2, \forall v \in \nu ^ \perp
    \end{equation*}
    Then, $M$ has only one end.
\end{teorema}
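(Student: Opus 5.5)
We argue by contradiction. Suppose $M$ has at least two ends. By Proposition \ref{pr:caoshenzhu-lemma2} (which rests on Theorem \ref{th:sobolev} and Theorem \ref{th:infinitevol}) there is a nonconstant bounded harmonic function $u$ on $M$ with $\int_M|\nabla u|^2<\infty$. We will show that stability forces $\nabla u\equiv 0$, which is the contradiction. We work outside the compact set $K$ on which $M$ fails to be strongly stable, so that \eqref{stability} holds for all $f\in C^1_c(M\setminus K)$ without the mean-zero constraint.

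\textbf{Step 1 (Bochner plus refined Kato).} Since $M\subset\mathbb R^{n+1}$, the Gauss equation gives $\operatorname{Ric}\ge -|A|^2$, and sharply
\[
\operatorname{Ric}(\nabla u,\nabla u)\ge -\tfrac{n-1}{n}|A|^2|\nabla u|^2 + (\text{terms in } H)
\]
(the lemma used by Cheng--Cheung--Zhou and Fu--Li). The refined Kato inequality for harmonic functions is
\[
|\nabla^2u|^2\ge \tfrac{n}{n-1}\,\bigl|\nabla|\nabla u|\bigr|^2 .
\]
Substituting both into Bochner's formula, with $h=|\nabla u|$, gives
\[
h\Delta h\ge \tfrac1{n-1}|\nabla h|^2-\tfrac{n-1}{n}|A|^2h^2-c\,|H||A|h^2 .
\]
Because $H$ is the Euclidean mean curvature and is not constant, the $H$-terms cannot simply be dropped. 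We must control them through $|H|\le\sqrt n|A|$ and through the anisotropic relation $|H_\varphi|\le\sqrt n\lambda^{\max}|A|$. This is what we expect to pin down the numerical threshold.

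\textbf{Step 2 (stability with test function $f=h\eta$).} Plug $f=h\eta$, with $\eta$ a cutoff supported outside $K$, into \eqref{stability}:
\[
\tfrac1{\Lambda_n}\int|A|^2h^2\eta^2\le\int|\nabla(h\eta)|^2 .
\]
Multiply the inequality from Step 1 by $\eta^2$, integrate by parts, and combine. The $|\nabla h|^2\eta^2$ terms should cancel with a favorable sign. The $|A|^2$ terms leave a coefficient of the form
\[
\tfrac1{\Lambda_n}-\tfrac{n-1}{n}\cdot(\text{factor from the } H \text{ term}),
\]
and this is positive exactly when $\Lambda_n<\tfrac{n^2}{n^2-1}$. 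To absorb the $H$-term we will use Young's inequality. If needed we will also use $f=h^{q}\eta$ with $q$ close to $1$ (the Fu--Li device), optimizing in $q$. We expect the admissible range of $q$ to be nonempty only for $n\le 6$, which explains the dimension restriction.

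\textbf{Step 3 (conclusion).} With this choice of constants we obtain
\[
\epsilon\int|A|^2h^2\eta^2+\epsilon\int|\nabla h|^2\eta^2\le C\int h^2|\nabla\eta|^2 .
\]
Take $\eta$ to be a logarithmic or linear cutoff on $B_{2R}\setminus B_R$. Since $\int h^2<\infty$, the right side tends to $0$ as $R\to\infty$. Hence $|A|h\equiv0$ and $\nabla h\equiv 0$ outside $K$, so $h$ is constant there, and finite energy together with the infinite volume of the ends (Theorem \ref{th:infinitevol}) forces $h\equiv0$. Then $u$ is constant outside $K$, hence constant on $M$ by unique continuation. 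This contradicts Proposition \ref{pr:caoshenzhu-lemma2}.

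\textbf{Main obstacle.} The delicate part is Step 2 with $H\neq0$. The stability operator involves only $|A|^2/\Lambda_n$, while Bochner involves the Euclidean $H$, and we must bound the cross term while keeping a strictly positive margin. We would first try absorbing $|H||A|$ via $|H|^2\le n|A|^2$ into the $|A|^2$ coefficient. If that loses too much, we would instead use Theorem \ref{th:sobolev} (the $L^2$ bound \eqref{eq:sobolev-estimate4}) to treat $H$ as a bounded lower-order potential.
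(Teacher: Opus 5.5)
There is a genuine gap, and the decisive one is in Steps 2--3. Your $\eta$ either vanishes near $K$ or it does not. If it does not, $h\eta\ge 0$ has positive mean and is not admissible in the weak stability inequality. If it does, as your use of strong stability on $M\setminus K$ requires, then $\int h^2|\nabla\eta|^2$ contains a contribution from the inner transition layer around $K$ that does not depend on $R$. Letting $R\to\infty$ then gives only $\int_{M\setminus K'}\bigl(|\nabla h|^2+|A|^2h^2\bigr)<\infty$, not $\nabla h\equiv 0$.

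This is not a technicality. Every ingredient you use (a nonconstant bounded harmonic function of finite energy, infinite-volume ends, stability outside a compact set) is also available for the higher-dimensional catenoid in $\mathbb R^{n+1}$, $3\le n\le 6$. That hypersurface is minimal, isotropic ($\Lambda_n=1$), has index one, and has two ends. So weak stability must be used on all of $M$.

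The paper does this with the Cheng--Cheung--Zhou test function. Take $f\equiv 1$ on $B_p(a)$, decreasing linearly with slope $1/R$ to $0$ at $r=a+R$, continuing to a negative plateau $-t_0$ on $B_p(a+2R+b)\setminus B_p(a+2R)$, and returning to $0$ at $r=a+3R+b$. Choose $b>0$ and $t_0\in[0,1]$ so that $\int_M f|\nabla u|=0$. Then $f|\nabla u|$ is admissible for weak stability with no cutoff near $K$, and the error term is at most $R^{-2}\int_M|\nabla u|^2\to 0$. Since $f\equiv 1$ on $B_p(a)$ with $a$ arbitrary, one gets $\nabla|\nabla u|\equiv 0$ on all of $M$.

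Second, your treatment of the $H$-terms would not close either. Take $H$ normalized so that $|A|^2=nH^2+|\mathring{A}|^2$. Writing $\operatorname{Ric}\ge -\frac{n-1}{n}|A|^2-c|H||A|$ discards the favorable $H^2$ contribution of the Gauss equation. Absorbing the cross term into $|A|^2$ is then too lossy, even with the sharp cross term $(n-2)\sqrt{\tfrac{n-1}{n}}\,|H||\mathring{A}|$ and the bound $|H||\mathring{A}|\le \frac{|A|^2}{2\sqrt n}$. The resulting coefficient is $\frac{n-1}{n}+\frac{(n-2)\sqrt{n-1}}{2n}$. For $n=5,6$ this exceeds $\frac{n}{n-1}$, and even $\frac{n-1}{n-2}$, which is the best threshold the $fh^q$ device can give. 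So your scheme fails there already for $\Lambda_n=1$.

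The paper keeps that term. It uses the Shiohama--Xu bound $\operatorname{Ric}\ge\frac{n-1}{n}\bigl(nH^2-\frac{n(n-2)}{\sqrt{n(n-1)}}|H||\mathring{A}|-|\mathring{A}|^2\bigr)$ and splits the cross term by Young's inequality so the $|\mathring{A}|^2$ coefficient becomes $\frac{n+1}{n}$. It then rewrites $\frac{n+1}{n}|\mathring{A}|^2=\frac{n+1}{n}|A|^2-(n+1)H^2$.

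The leftover $H^2$ coefficient $\frac{(n-2)^2(n-1)}{8}-2n$ is $\le 0$ exactly when $n\le 6$. This, not a range of exponents $q$, is the source of the dimension restriction. The $|A|^2$ term is traded via stability for $\frac{n+1}{n}\Lambda_n\int|\nabla(f|\nabla u|)|^2$. Comparing with the Kato coefficient $\frac{n}{n-1}$ gives exactly $\Lambda_n<\frac{n^2}{n^2-1}$.

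The computation is pointwise in $H$ and $\mathring{A}$: no information on $H_\varphi$ and no bound on $H$ is needed. Your fallback of treating $H$ as a bounded potential is also unjustified, since $H$ need not be bounded.
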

\begin{proof}
We use the following estimate due to Shiohama–Xu (see \cite[Theorem 1]{shiohamaxu}):
\begin{equation}
    \label{eq:sx}
    \operatorname{Ric}_M \ge \frac{n-1}{n} \left(  nH^2 - \frac{n(n-2)}{\sqrt{n(n-1)}}H|\mathring{A}|- |\mathring{A}|^2\right).
\end{equation}
with $|\mathring{A}|$ being the norm of the traceless second fundamental form.\\
Assume by contradiction that $M$ has 2 ends. Then, by Proposition \ref{pr:caoshenzhu-lemma2}, there exists a harmonic function $u$ with finite energy integral. From Bochner's formula,
\begin{equation}
    \label{eq:bochner}
    \Delta |\nabla u|^2= 2\left( |\nabla^2 u|^2 + \operatorname{Ric}(\nabla u, \nabla u) \right).
\end{equation}
Applying the improved Kato inequality
\begin{equation*}
    \label{eq:kato}
    \frac{n}{n-1}|\nabla |\nabla u||^2 \le|\nabla^2 u|^2,
\end{equation*}
one obtains
\begin{equation}
    \label{eq:bochner-kato}
    |\nabla u | \Delta |\nabla u| \ge \operatorname{Ric}(\nabla u, \nabla u)+ \frac{1}{n-1}|\nabla |\nabla u||^2.
\end{equation}
Multiplying \eqref{eq:bochner-kato} by $f^2$ ($f$ with compact support), using \eqref{eq:sx}, and integrating on $M$,
\begin{equation*}\begin{split}
    \label{eq:fuli1}
 0 &\le \int_M f^2 |\nabla u| \Delta |\nabla u| - \frac{1}{n-1} \int_M f^2 |\nabla |\nabla u||^2 \\ &+\int_M \left( \frac{(n-2)\sqrt{n(n-1)}}{n}|\mathring{A}||H| + \frac{n-1}{n} |\mathring{A}|^2 - (n-1)H^2\right) f^2 |\nabla u|^2. 
\end{split}\end{equation*}
Integration by parts gives
\begin{equation*}\begin{split}
    \label{eq:fuli2}
     0 \le &- 2 \int_M f \langle \nabla f, \nabla |\nabla u|\rangle |\nabla u| - \frac{n}{n-1} \int_M f^2 |\nabla |\nabla u||^2 + \frac{n-1}{n} \int_M |\mathring{A}|^2 f^2 |\nabla u|^2\\
     &+ \int_M f^2 \left( \frac{(n-2)\sqrt{n(n-1)}}{n}|\mathring{A}||H| - (n-1)H^2\right)|\nabla u|^2.
\end{split}\end{equation*}
Applying Young's inequality $(2ab \le a^2 + b^2)$ with
\begin{equation*}
    \begin{split}
        \label{eq:fuli3}
       a=  \frac{(n-2) \sqrt{n(n-1)}}{2 \sqrt{2}\sqrt{n}}|H| \qquad \text{and} \qquad b= \frac{\sqrt{2}}{\sqrt{n}}|\mathring{A}|,
    \end{split}
\end{equation*}
one obtains
\begin{equation}
    \begin{split}
        \label{eq:fuli4}
        0 & \le - 2 \int_M f \langle \nabla f,  \nabla |\nabla u| \rangle |\nabla u| - \frac{n}{n-1} \int_M f^2 |\nabla |\nabla u||^2 + \frac{n+1}{n} \int_M |\mathring{A}|^2 f^2 |\nabla u|^2\\
        &+\int_M f^2 \left( \frac{(n-2)^2(n-1)}{8}- (n-1)\right)H^2 |\nabla u|^2=\\
        &= - 2 \int_M f \langle \nabla f,  \nabla |\nabla u| \rangle |\nabla u| - \frac{n}{n-1} \int_M f^2 |\nabla |\nabla u||^2 + \frac{n+1}{n} \int_M (nH^2+|\mathring{A}|^2) f^2 |\nabla u|^2\\
        &+ \int_M f^2 \left( \frac{(n-2)^2(n-1)}{8}- 2n \right)H^2 |\nabla u|^2.
    \end{split}
\end{equation}
We now follow the choice of test function made in
\cite[Theorem 3.1]{chengcheungzhou}. For completeness, we give the explicit
construction. Let \(r(x)=d_M(p,x)\). For \(a,b,R>0\) and \(0\le t_0\le1\), define
\(f=f(t_0,a,b,R)\) by
\[
f(t_0,a,b,R)(x)=
\begin{cases}
1,
& x\in B_p(a),\\[4pt]
\dfrac{a+R-r(x)}{R},
& x\in B_p(a+R)\setminus B_p(a),\\[8pt]
t_0\dfrac{a+R-r(x)}{R},
& x\in B_p(a+2R)\setminus B_p(a+R),\\[8pt]
-t_0,
& x\in B_p(a+2R+b)\setminus B_p(a+2R),\\[4pt]
t_0\dfrac{r(x)-(a+3R+b)}{R},
& x\in B_p(a+3R+b)\setminus B_p(a+2R+b),\\[8pt]
0,
& x\in M\setminus B_p(a+3R+b).
\end{cases}
\]
As in \cite[Theorem 3.1]{chengcheungzhou}, for fixed \(a,R>0\) one can choose
\(b>0\) and \(t_0\in[0,1]\) so that
\begin{equation}
\label{eq:zero-mean-f-grad-u}
    \int_M f|\nabla u|=0.
\end{equation}
Thus \(f|\nabla u|\) is an admissible test function for the weak stability
inequality. Applying \eqref{stability} to \(f|\nabla u|\), we obtain
\begin{equation}
\label{eq:stab-fuli}
    \int_M \frac{1}{\Lambda_n} \big(nH^2+|\mathring{A}|^2\big) f^2|\nabla u|^2
    \le
    \int_M |\nabla(f|\nabla u|)|^2 .
\end{equation}
Combining \eqref{eq:fuli4} and \eqref{eq:stab-fuli} gives
\begin{equation*}
    \begin{split}
        \label{eq:fuli5}
        0 &\le  - 2 \int_M f \langle \nabla f,  \nabla |\nabla u| \rangle |\nabla u| - \frac{n}{n-1} \int_M f^2 |\nabla |\nabla u||^2 + \frac{n+1}{ n} \Lambda_n \int_M |\nabla (f |\nabla u|)|^2\\
        &+ \int_M f^2 \left( \frac{(n-2)^2(n-1)}{8}- 2n \right)H^2 |\nabla u|^2=\\
        &= \left( -2+\frac{2(n+1)}{n}\Lambda_n \right) \int_M f \langle \nabla f,  \nabla |\nabla u| \rangle |\nabla u| + \left(-\frac{n}{n-1}+ \frac{n+1}{n}\Lambda_n\right) \int_M f^2 |\nabla |\nabla u||^2\\ &+ \frac{n+1}{n }\Lambda_n \int_M |\nabla f|^2 |\nabla u|^2
        + \int_M f^2 \left( \frac{(n-2)^2(n-1)}{8}- 2n \right)H^2 |\nabla u|^2.
    \end{split}
\end{equation*}
Now, weighted Young's inequality applied to the first term yields
\begin{equation*}
    \begin{split}
        \label{eq:fuli6}
        2 \left| \int_M  f \langle \nabla f,  \nabla |\nabla u| \rangle |\nabla u| \right| \le \varepsilon \int_M f^2 |\nabla |\nabla u||^2 + \frac{1}{\varepsilon} \int_M |\nabla f|^2 |\nabla u|^2
        \end{split}
\end{equation*}
and hence
\begin{equation}
    \begin{split}
        \label{eq:fuli7}
        0 &\le \left( - \frac{n}{n-1} + \frac{n+1}{n}\Lambda_n + \varepsilon \frac{(n+1)\Lambda_n-n}{n}\right) \int_Mf^2 |\nabla |\nabla u||^2 \\
        &+ \left( \frac{n+1}{n}\Lambda_n + \frac{(n+1)\Lambda_n -n}{n \varepsilon}\right) \int_M |\nabla f|^2 |\nabla u|^2 + \int_M f^2\left( \frac{(n-2)^2(n-1)}{8}- 2n \right)H^2 |\nabla u|^2.
         \end{split}
\end{equation}
First, note that the coefficient of the last term in the inequality \eqref{eq:fuli7} is less or equal than zero if and only if $n \le 6$.
\\Moreover, define
\begin{equation*}
    \label{eq:fuli8}
    A_ \varepsilon=  \frac{n}{n-1} - \frac{n+1}{n}\Lambda_n - \varepsilon \frac{(n+1)\Lambda_n-n}{n} \qquad \text{and} \qquad B_\varepsilon= \frac{n+1}{n}\Lambda_n + \frac{(n+1)\Lambda_n -n}{n \varepsilon}
\end{equation*}
Then, \eqref{eq:fuli7} rewrites as
\begin{equation*}
    \label{eq:fuli9}
    A_\varepsilon \int_M f^2 |\nabla |\nabla u||^2 \le B_\varepsilon \int_M |\nabla f|^2 |\nabla u|^2.
\end{equation*}
As $\varepsilon \to 0$, the coefficients $A_\varepsilon$ and $B_\varepsilon$ are greater than zero when
\begin{equation*}
    \label{eq:cond-lambda}
    \Lambda_n < \frac{n^2}{n^2-1}.
\end{equation*}
Letting $R \to \infty$, using that $u$ has finite energy, we obtain $\nabla |\nabla u| \equiv 0$ on $M$, that is $|\nabla u|$ constant.
\\Finally, if $|\nabla u| \ne 0$,
\begin{equation*}
    \operatorname{vol}(E)= \int_E d\mu= \frac{1}{|\nabla u|^2}\int_E |\nabla u|^2 \,d\mu < \infty,
\end{equation*}
contradicting Theorem \ref{th:infinitevol}. Thus $|\nabla u|\equiv 0$, and hence $u$ is constant.
This contradiction proves that $M$ has only one end.
\end{proof} 
\begin{osservazione}
We note that our theorem also applies to the special case $H_\varphi = 0$, i.e., anisotropic minimal hypersurfaces. 
As already mentioned in the Introduction, in \cite{lixia}, an analogous result was established for stable anisotropic minimal hypersurfaces $(n \le 5)$ satisfying the pinching condition \eqref{eq:pinching-intro-lambda}, with $\Lambda_4 = \frac{23}{20}$ and $\Lambda_5 = \frac{1001}{1000}$. 
Our result extends their theorem to dimension 6 and also provides a slightly improved pinching constant $ \left(\Lambda_5< \frac{25}{24}\right)$ in $\mathbb R^6$.

\end{osservazione}

\end{document}